\documentclass[11pt,twoside]{article}

\usepackage[left=3cm,right=3cm,top=3cm,bottom=3cm]{geometry}
\usepackage{url}
\usepackage{amsmath,amssymb,amsthm,mathtools}
\numberwithin{equation}{section}

\usepackage{graphicx,epsfig}
\usepackage{xcolor}

\usepackage{enumerate}
\usepackage{multicol}
\usepackage{leftindex,tensor,mhchem}
\usepackage{scalerel,stackengine}
\stackMath

\newcommand\reallywidehat[1]{%
	\savestack{\tmpbox}{\stretchto{%
			\scaleto{%
				\scalerel*[\widthof{\ensuremath{#1}}]{\kern-.6pt\bigwedge\kern-.6pt}%
				{\rule[-\textheight/2]{1ex}{\textheight}}%
			}{\textheight}}{0.5ex}}%
	\stackon[1pt]{#1}{\tmpbox}%
}

\theoremstyle{plain}
\newtheorem{theorem}{Theorem}

\newtheorem{proposition}{Proposition}

\theoremstyle{definition}
\newtheorem{definition}{Definition}

\newtheorem{example}{Example}

\theoremstyle{remark}
\newtheorem{remark}{Remark}

\usepackage{eso-pic}
\makeatletter
\AddToShipoutPicture{
	\setlength{\@tempdimb}{.5\paperwidth}
	\setlength{\@tempdimc}{.5\paperheight}
	\setlength{\unitlength}{1pt}
	\put(\strip@pt\@tempdimb,\strip@pt\@tempdimc){
	}
}
\makeatother

\begin{document}
	\setcounter{page}{1}
	\thispagestyle{empty}
	
	\pagestyle{myheadings}
	\markboth{Chaudhary and Devi }{Chaudhary and Devi}
	
	\vspace{0.1in}

		{\baselineskip 20truept
			\begin{center}
				{\Large \bf Record Values Based Inaccuracy Measures with an Application to Testing Symmetry}
				\footnote{\noindent
					{\bf 1.} Corresponding author  
					E-mail: skchaudhary1994@gmail.com, santosh.chaudhary@cuj.ac.in}
		\end{center}}
		
		\vspace{0.1in}
		
		\begin{center}
			{\large \bf Santosh Kumar Chaudhary$^{1}$ and Bhagwati Devi$^{2}$}\\[0.1in]
			{\large \it
				$^{1, 2}$ Department of Statistics, Central University of Jharkhand,  
				Cheri-Manatu, Ranchi – 835222, Jharkhand, India}
		\end{center}
		
		\vspace{0.1in}
		\baselineskip 12truept
		
		\begin{abstract}
			This paper studies information-theoretic inaccuracy measures for record values
			and their application to testing symmetry.
			We study the Kerridge inaccuracy measure, cumulative residual inaccuracy,
			cumulative past inaccuracy, and extropy-based inaccuracy measures to the
			distributions of $n$th upper and lower $k$-record values.
			Explicit expressions are derived for several common lifetime distributions
			(exponential, Pareto, Weibull, and uniform), and the monotonic behaviour of
			these measures with respect to the record order $n$, the parameter $k$, and the
			distributional parameters is analysed.
			Building on the known characterisation that equality of upper and lower
			record-based inaccuracy implies symmetry, we develop a nonparametric
			goodness-of-fit test for symmetry.
			The test statistic is the difference between the estimated Kerridge inaccuracy
			measures for upper and lower records, and its null distribution is obtained by a
			bootstrap procedure that enforces symmetry.
			A simulation study demonstrates that the test maintains its nominal level for a
			variety of symmetric distributions and achieves high power against skewed
			alternatives, while a real-data illustration on annual maximum temperatures
			confirms its practical usefulness.
			
			\vspace{0.1in}
			
			\noindent\textbf{Key Words:} \textit{Record values; Inaccuracy measure; Extropy;
				Entropy; Goodness-of-fit test; Symmetry.}
			
			\noindent\textbf{Mathematical Subject Classification (2020):} \textit{62N05;
				62G10; 90B25.}
		\end{abstract}

	\section{Introduction}
	
	Inaccuracy measures play a fundamental role in information theory and statistics by quantifying the discrepancy between probability distributions. Such measures provide insight into the amount of information lost or gained when one distribution is used to approximate another. Among the earliest and most influential measures is Shannon entropy, which quantifies uncertainty and has motivated the development of several related measures, including Kerridge inaccuracy. These measures have found wide-ranging applications in coding theory, statistical inference, reliability analysis, and lifetime modeling. 
	
	In recent years, growing attention has been directed toward extending inaccuracy measures to functions of order statistics and record values. Record values arise naturally in many practical situations, such as meteorology, reliability testing, and environmental studies. The present study focuses on inaccuracy measures associated with record values, including Kerridge inaccuracy, extropy-based inaccuracy, and their cumulative variants. Inaccuracy measures based on record values have received little attention in the literature, and their properties remain largely unexplored. This paper aims to fill this gap by conducting a systematic study of several information‑theoretic inaccuracy measures for the $n$th upper and lower $k$-record values.
	
	\subsection{Inaccuracy Measures}
	
	Inaccuracy measures, including Kerridge’s measure and Shannon entropy, assess the adequacy of statistical models in representing observed data. These measures have been extensively studied in coding theory (Nath (1968)). Kumar and Taneja (2015) introduced the cumulative residual inaccuracy measure, while Tahmasebi et al. (2018) proposed a cumulative inaccuracy measure for lower record values and established characterization results in the dynamic case. Thapliyal and Taneja (2013) examined inaccuracy measures between the distributions of order statistics and the parent random variable. More recently, Mohammadi and Hashempour (2024) introduced the dynamic cumulative residual extropy inaccuracy (DCREI) measure and its weighted version by extending the dynamic cumulative residual extropy framework. Tahmasebi and Daneshi (2018) considered a measure of inaccuracy between distributions of the nth record value and parent random variable.

	Let $X$ and $Y$ be two non-negative random variables with cumulative distribution functions (cdf) $F_X(x)$ and $F_Y(x)$, probability density functions (pdf) $f_X(x)$ and $f_Y(x)$, and reliability functions $\bar{F}_X(x)$ and $\bar{F}_Y(x)$, respectively. Kerridge (1961) introduced a measure of inaccuracy, known as Kerridge inaccuracy, as a generalization of Shannon entropy. If $F_X(x)$ represents the true distribution corresponding to observations and $F_Y(x)$ denotes the distribution assigned by the experimenter, the Kerridge (1961) inaccuracy measure is defined as
	\begin{align}\label{1}
		H(f_X,f_Y) = - \int f_X(x)\log f_Y(x)\,dx.
	\end{align}
	It is evident that $H(f_X,f_X)=H(X)=- \int f_X(x)\log f_X(x)\,dx$ coincides with the Shannon entropy introduced by Shannon (1948). Ahmadi (2021) used Kerridge inaccuracy of record values to characterize symmetric distributions.
	
	The inaccuracy extropy measure, denoted by $J(f_X,f_Y)$, is defined as
	\begin{align}\label{2}
		J(f_X,f_Y) = -\frac{1}{2} \int f_X(x)f_Y(x)\,dx.
	\end{align}
	Note that $J(f_X,f_X)=J(X)=-\frac{1}{2} \int f_X^2(x)\,dx$ corresponds to the extropy measure introduced by Lad et al. (2015). Gupta and Chaudhary (2024) used the inaccuracy extropy measure of record values to characterize symmetric distributions. 
	
	Cumulative residual extropy inaccuracy (CRIJ) and cumulative past extropy inaccuracy (CPIJ), which extend cumulative residual extropy and cumulative past extropy, respectively, are defined as
	\begin{align}
		J(\bar{F}_X,\bar{F}_Y)=CRIJ(X,Y)
		&=-\frac{1}{2} \int \bar{F}_X(x)\bar{F}_Y(x)\,dx, \\
		J(F_X,F_Y)=CPIJ(X,Y)
		&=-\frac{1}{2} \int F_X(x)F_Y(x)\,dx.
	\end{align}
	It follows that $CRIJ(X,X)$ and $CPIJ(X,X)$ reduce to cumulative past extropy (CPJ) and cumulative residual extropy (CRJ), respectively, given by
	\begin{align}
		CRIJ(X,X)=CPJ(X)
		&=-\frac{1}{2} \int \bar{F}_X^2(x)\,dx, \\
		CPIJ(X,X)=CRJ(X)
		&=-\frac{1}{2} \int F_X^2(x)\,dx.
	\end{align}
	Gupta and Chaudhary (2024) further established characterizations of symmetric distributions using CRIJ and CPIJ based on record values.

		In information theory, the Kullback--Leibler (KL) discrimination information measure, introduced by Kullback and Leibler (1951), is a widely used measure for quantifying the discrepancy between two probability distributions. Let $X$ and $Y$ be absolutely continuous random variables with probability density functions $f_X(x)$ and $f_Y(x)$, respectively. We assume that $f_X$ is absolutely continuous with respect to $f_Y$, denoted by $f_X \ll f_Y$, that is,
		\[
		f_Y(x)=0 \quad \Longrightarrow \quad f_X(x)=0 \qquad \text{almost everywhere},
		\]
		or equivalently,
		\[
		\operatorname{supp}(f_X)\subseteq \operatorname{supp}(f_Y).
		\]
		Under these conditions, the Kullback--Leibler divergence between $X$ and $Y$ is defined as
		\begin{align}\label{defkullback}
			K(X,Y)
			=\int f_X(x)\ln\left(\frac{f_X(x)}{f_Y(x)}\right)\,dx,
		\end{align}
		provided that
		\[
		\int f_X(x)\left|\ln\left(\frac{f_X(x)}{f_Y(x)}\right)\right|dx<\infty.
		\]
		Furthermore, by Gibbs' inequality,
		\[
		K(X,Y)\geq 0,
		\]
		with equality if and only if $f_X(x)=f_Y(x)$ almost everywhere.

		Another measure of discrepancy between two probability distributions is the
		$L^2$-divergence (or squared $L^2$-distance) between their density functions,
		defined by
		\begin{align}\label{3}
			I(X,Y)=\frac12\int\left(f_X(x)-f_Y(x)\right)^2\,dx.
		\end{align}
		Clearly,
		\[
		I(X,Y)\ge0,
		\]
		with equality if and only if $f_X(x)=f_Y(x)$ almost everywhere.
	Kumar and Taneja (2012) proposed a cumulative residual inaccuracy measure defined as
	\begin{align}\label{4}
		H(\bar{F}_X,\bar{F}_Y)
		= - \int \bar{F}_X(x)\log \bar{F}_Y(x)\,dx,
	\end{align}
	while Thapliyal and Taneja (2015) introduced a cumulative past inaccuracy measure given by
	\begin{align}
		H(F_X,F_Y)
		= - \int F_X(x)\log F_Y(x)\,dx.
	\end{align}
	Note that $\xi(X)=- \int \bar{F}_X(x)\log \bar{F}_X(x)\,dx$ and $\bar{\xi}(X)=- \int F_X(x)\log F_X(x)\,dx$ correspond to cumulative residual entropy and cumulative past entropy, respectively, as defined by Rao et al. (2004) and Di Crescenzo and Longobardi (2009).
	
	The various measures of inaccuracy and extropy-based inaccuracy are not competing alternatives but rather complementary tools for quantifying distributional discrepancies. Nevertheless, extropy-based inaccuracy measures possess certain advantages, including symmetry, finiteness, and computational simplicity (Hashempour and Mohammadi (2024)). The ease of computation associated with extropy makes it attractive for developing goodness-of-fit tests and inferential procedures in the analysis of record values.
	
	\subsection{Entropy and Extropy Measures}
	
	Entropy, introduced by Shannon (1948), quantifies the average level of uncertainty associated with the outcomes of a random experiment. For a continuous random variable $X$ with probability density function $f_X(x)$, the Shannon entropy is defined as
	\begin{equation}\label{1eq2}
		H(X) 
		= - \int_{-\infty}^{\infty} f_X(x)\ln\big(f_X(x)\big)\,dx
		= \mathbb{E}\!\left[-\ln f_X(X)\right].
	\end{equation}
	
	Lad et al. (2015) introduced the complementary dual of Shannon entropy, referred to as \emph{extropy}. The extropy of a continuous random variable $X$ is defined by
	\begin{equation}\label{extropy}
		J(X)
		= -\frac{1}{2} \int_{-\infty}^{\infty} f_X^2(x)\,dx
		= -\frac{1}{2}\,\mathbb{E}\!\left[f_X(X)\right].
	\end{equation}
	
	The cumulative residual extropy of a continuous random variable $X$, proposed by Jahanshahi et al. (2019), is defined as
	\begin{align}
		\xi J(X)
		&= -\frac{1}{2} \int_{S_X} \bar{F}_X^{\,2}(x)\,dx,
	\end{align}
	where $S_X$ denotes the support of $X$. Analogously, the cumulative past extropy of $X$ is given by
	\begin{align}
		\bar{\xi} J(X)
		&= -\frac{1}{2} \int_{S_X} F_X^{\,2}(x)\,dx.
	\end{align} where $
	S_X=\{x\in\mathbb{R}:0<F_X(x)<1\} $
	denotes the support of the continuous random variable $X$.

	\begin{example}
		Let $X$ and $Y$ be exponential random variables with rate parameters
		$\theta>0$ and $\mu>0$, respectively. Then
		\begin{align*}
			H(f_X,f_Y)
			&=- \int_{0}^{\infty} f_X(x)\log f_Y(x)\,dx
			= \frac{\mu}{\theta}-\log \mu, \\
			J(f_X,f_Y)
			&=-\frac{1}{2} \int_{0}^{\infty} f_X(x)f_Y(x)\,dx
			= -\frac{\theta\mu}{2(\theta+\mu)}, \\
			H(f_X,f_X)
			&=H(X)
			= - \int_{0}^{\infty} f_X(x)\log f_X(x)\,dx
			=1- \log \theta, \\
			J(f_X,f_X)
			&=J(X)
			= -\frac{1}{2} \int_{0}^{\infty} f_X^2(x)\,dx
			= -\frac{\theta}{4}.   
		\end{align*}
	\end{example}

	Extropy and its applications have recently attracted considerable attention in the literature. Both entropy and extropy have found increasing use in machine learning and related areas as effective tools for handling complex problems. One notable advantage of extropy over entropy is the availability of closed-form expressions for finite mixture distributions, whereas such expressions are often unavailable for entropy and variance (see Toomaj et al., 2023). Extropy is computationally simpler, making it particularly attractive for the development of goodness-of-fit tests and inferential procedures.
	
	Several applied studies have highlighted the usefulness of extropy-based measures. Tahmasebi and Toomaj (2022) analyzed stock market behavior in OECD countries using a generalization of extropy known as negative cumulative extropy. Balakrishnan et al. (2022) applied Tsallis extropy to pattern recognition problems, while Kazemi et al. (2021) investigated the fractional Deng extropy in classification tasks. Recently, Tahmasebi et al. (2022) employed extropy measures in the context of compressive sensing.
	
	\subsection{Record Values}
	
	The concept of record values was studied by Ahsanullah (1995) and Arnold et al. (1998) for further developments. Let $X$ be a continuous random variable with cumulative distribution function $F_X(x)$, probability density function $f_X(x)$, and survival function $\bar{F}_X(x)=1-F_X(x)$. The probability density functions of the $n$th upper record value $U_n$ and the $n$th lower record value $L_n$ for positive integer $n$ are given, respectively, by Arnold et al. (2008)
	\begin{align}
		f_{U_n}(x)
		&=\frac{1}{(n-1)!}\big[-\log \bar{F}_X(x)\big]^{n-1} f_X(x),
		\qquad -\infty<x<\infty, \label{funU} \\
		f_{L_n}(x)
		&=\frac{1}{(n-1)!}\big[-\log F_X(x)\big]^{n-1} f_X(x),
		\qquad -\infty<x<\infty. \label{funL}
	\end{align}
	
	The corresponding cumulative distribution functions of $U_n$ and $L_n$ are
	\begin{align}
		F_{U_n}(x)
		&=1-\bar{F}_X(x)\sum_{i=0}^{n-1}\frac{\big[-\log \bar{F}_X(x)\big]^i}{i!}, \label{cdfUn} \\
		F_{L_n}(x)
		&=F_X(x)\sum_{i=0}^{n-1}\frac{\big[-\log F_X(x)\big]^i}{i!}. \label{cdfLn}
	\end{align}
	
	The probability density functions of the $n$th upper $k$-record value $U_{n,k}$ and the $n$th lower $k$-record value $L_{n,k}$ for positive integers $n$ and $k$ are given by Arnold et al. (2008)
	\begin{align}
		f_{U_{n,k}}(x)
		&=\frac{k^{n}}{(n-1)!}\big[-\log \bar{F}_X(x)\big]^{n-1}
		\big[\bar{F}_X(x)\big]^{k-1} f_X(x),
		\qquad -\infty<x<\infty, \label{funUk} \\
		f_{L_{n,k}}(x)
		&=\frac{k^{n}}{(n-1)!}\big[-\log F_X(x)\big]^{n-1}
		\big[F_X(x)\big]^{k-1} f_X(x),
		\qquad -\infty<x<\infty. \label{funLk}
	\end{align}
	
	The cumulative distribution functions of $U_{n,k}$ and $L_{n,k}$ are, respectively, given by
	\begin{align}
		F_{U_{n,k}}(x)
		&=1-\bar{F}_X^{\,k}(x)\sum_{i=0}^{n-1}\frac{\big[-k\log \bar{F}_X(x)\big]^i}{i!}, \label{cdfUnk} \\
		F_{L_{n,k}}(x)
		&=F_X^{\,k}(x)\sum_{i=0}^{n-1}\frac{\big[-k\log F_X(x)\big]^i}{i!}. \label{cdfLnk}
	\end{align}
	
	Owing to the advantages of $k$-records over traditional record values, particularly in terms of flexibility and efficiency, several authors have emphasized their usefulness in statistical modeling (see Xiong et al., 2021; Jose and Sathar, 2022). Motivated by these developments, the present study focuses on constructing tests of symmetry for the underlying distribution based on characterization results involving cumulative residual and cumulative past extropy measures of $k$-record values.

		While Tahmasebi and Daneshi (2018) and Ahmadi (2021) studied Kerridge inaccuracy for ordinary records ($k=1$), 
		and Gupta and Chaudhary (2024) considered extropy‑based inaccuracy for $k$-records, 
		the present work extends Kerridge, cumulative residual, and cumulative past inaccuracy measures to $k$-record values. 
		We derive closed‑form expressions, establish new representations involving hazard rates and mean lifetimes, 
		and, crucially, construct the first bootstrap test for symmetry based on the difference of Kerridge inaccuracy 
		for upper and lower $k$-records.
		
		The remainder of the paper is organized as follows. Section~2 examines the Kerridge inaccuracy measure for record values. In Section~3, cumulative residual inaccuracy measures for record values are investigated, while Section~4 is devoted to cumulative past inaccuracy measures for record values. Section 5 is application and Section 6 concludes this paper.
	
	\section{Kerridge Inaccuracy Measure}
	
	Several authors have studied measures of inaccuracy between the distributions of record values, order statistics, and the parent random variable; see, for example, Thapliyal and Taneja (2015) and Goel et al. (2018). Let $X$ be a continuous random variable with strictly increasing cumulative distribution function $F_X(x)$, probability density function $f_X(x)$, and survival function $\bar{F}_X(x)=1-F_X(x)$.  Here, $F_X$ is continuous and strictly increasing on the support of $X$; hence the quantile function $F_X^{-1}$ exists and is also strictly increasing. The Kerridge inaccuracy measure associated with the $n$th upper $k$-record value $U_{n,k}$ and the parent density function $f_X(x)$ is defined as
	\begin{align}
		H(f_{U_{n,k}},f_X)
		&=- \int_{-\infty}^{\infty} f_{U_{n,k}}(x)\log f_X(x)\,dx \nonumber \\
		&=- \int_{-\infty}^{\infty} \frac{k^n}{(n-1)!}
		\big[-\log \bar{F}_X(x)\big]^{n-1}
		\big[\bar{F}_X(x)\big]^{k-1}
		f_X(x)\log f_X(x)\,dx \nonumber\\
		&=- \int_{0}^{\infty} \frac{k^n}{(n-1)!}
		t^{\,n-1} e^{-kt}
		\log f_X\!\left(F_X^{-1}(1-e^{-t})\right) dt \nonumber\\
		&=\mathbb{E}\!\left[-\log f_X\!\left(F_X^{-1}(1-e^{-T_{n,k}})\right)\right],
	\end{align}
	where $T_{n,k}$ follows a Gamma distribution with shape parameter $n$ and rate parameter $k$ (i.e.\ $T_{n,k}\sim \text{Gamma}(n,k)$), having probability density function
	\[
	f_{T_{n,k}}(t)=\frac{k^n}{\Gamma(n)} t^{n-1} e^{-kt}, \qquad t>0,
	\]
	with $n$ and $k$ being positive integers.

	Similarly, the Kerridge inaccuracy measure associated with the $n$th lower $k$-record value $L_{n,k}$ and the parent density function $f_X(x)$ is given by
	\begin{align}
		H(f_{L_{n,k}},f_X)
		&=- \int_{-\infty}^{\infty} f_{L_{n,k}}(x)\log f_X(x)\,dx \nonumber \\
		&=- \int_{-\infty}^{\infty} \frac{k^n}{(n-1)!}
		\big[-\log F_X(x)\big]^{n-1}
		\big[F_X(x)\big]^{k-1}
		f_X(x)\log f_X(x)\,dx \nonumber\\
		&=- \int_{0}^{\infty} \frac{k^n}{(n-1)!}
		t^{\,n-1} e^{-kt}
		\log f_X\!\left(F_X^{-1}(e^{-t})\right) dt \nonumber\\
		&=\mathbb{E}\!\left[-\log f_X\!\left(F_X^{-1}(e^{-T_{n,k}})\right)\right].
	\end{align}
	
	Ahmadi (2021) showed that the equality of the Kerridge inaccuracy measures associated with the distributions of upper and lower $k$-record values and the parent distribution characterizes symmetric distributions.
	
	In the following examples, explicit expressions for
	$H(f_{U_{n,k}},f_X)$ and $H(f_{L_{n,k}},f_X)$ are obtained for several commonly used lifetime distributions. For $k=1$, the above expressions reduce to those obtained by Tahmasebi and Daneshi (2018).
	
	\begin{example}
		Let $X$ be an exponential random variable with rate parameter $\theta>0$. Then
		\[
		H(f_{U_{n,k}},f_X)=\frac{n}{k}-\log(\theta).
		\]
		For fixed $n$ and $k$, $H(f_{U_{n,k}},f_X)$ is a decreasing function of $\theta$. For fixed $\theta$ and $k$, it increases with $n$, while for fixed $\theta$ and $n$, it decreases with $k$.
	\end{example}
	
		\begin{table}[htbp]
			\centering
			\caption{Values of $H(f_{U_{n,k}},f_X)$ for the exponential distribution with $\theta=1$.}
			\label{tab:expvalues}
			\begin{tabular}{c|cccc}
				\hline
				$n$ & $k=1$ & $k=2$ & $k=3$ & $k=4$ \\
				\hline
				1 & 1.000 & 0.500 & 0.333 & 0.250 \\
				2 & 2.000 & 1.000 & 0.667 & 0.500 \\
				3 & 3.000 & 1.500 & 1.000 & 0.750 \\
				4 & 4.000 & 2.000 & 1.333 & 1.000 \\
				\hline
			\end{tabular}
		\end{table}

	\begin{example}
		Let $X$ follow a Pareto distribution with probability density function
		\[
		f_X(x)=\theta x^{-(\theta+1)}, \qquad x>1,\ \theta>0.
		\]
		Then
		\[
		H(f_{U_{n,k}},f_X)=\left(1+\frac{1}{\theta}\right)\frac{n}{k}-\log(\theta).
		\]
		The monotonicity properties with respect to $\theta$, $n$, and $k$ are analogous to those in the exponential case.
	\end{example}
	
	\begin{example}
		Let $X$ have a Weibull distribution with probability density function
		\[
		f_X(x)=\lambda\beta x^{\beta-1}e^{-\lambda x^{\beta}},
		\qquad x>0,\ \beta>0,\ \lambda>0.
		\]
		Then
		\[
		H(f_{U_{n,k}},f_X)
		= n-\log\beta-\frac{\log\lambda}{\beta}
		-\left(\frac{\beta-1}{\beta}\right)
		\frac{k^n}{\Gamma(n)}
		\int_{0}^{\infty} t^{n-1} e^{-kt}\log(t)\,dt.
		\]
	\end{example}
	
	\begin{example}
		Let $X$ have the standard uniform distribution with probability density function
		\[
		f_X(x)=1, \qquad 0<x<1.
		\]
		Then
		\[
		H(f_{U_{n,k}},f_X)=0
		\quad \text{and} \quad
		H(f_{L_{n,k}},f_X)=0.
		\]
	\end{example}
	
	\begin{example}
		Let $X$ have probability density function
		\[
		f_X(x)=3(1-x)^2, \qquad 0<x<1.
		\]
		Then
		\[
		H(f_{U_{n,k}},f_X)=-\log 3 + \frac{2n}{3k}.
		\]
	\end{example}

		\begin{definition}
			Let $X$ and $Y$ be two continuous (or discrete) random variables with probability density (or mass) functions $f_X(x)$ and $F_Y(x)$, respectively, such that their supports are $S_X$ and $S_Y$. We say that $X$ is smaller than $Y$ in the {likelihood ratio order} (denoted by $X \le_{\text{lr}} Y$) if the ratio 
			$\frac{f_Y(x)}{f_X(x)} $
			is a non-decreasing function of $x$ over the union of their supports, $S_X \cup S_Y$. 
		\end{definition}
		
		\begin{theorem}[Shaked and Shanthikumar, 2007, Theorem 1.C.1]
			Let $X$ and $Y$ be two continuous random variables with probability density functions $f$ and $g$, and cumulative distribution functions $F$ and $G$, respectively. 
			If $X$ is smaller than $Y$ in the likelihood ratio order (denoted by $X \le_{\text{lr}} Y$), then $X$ is smaller than $Y$ in the usual stochastic order (denoted by $X \le_{\text{st}} Y$):
			\[
			X \le_{\text{lr}} Y \implies X \le_{\text{st}} Y
			\]
		\end{theorem}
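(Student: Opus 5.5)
We prove that $X \le_{\text{lr}} Y$ implies $\bar F(x)\le \bar G(x)$ for every $x$. The idea is to turn the monotone density ratio into a single-crossing property of $f$ and $g$, and then integrate.

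\textbf{Step 1 (one crossing point).} Set $r(x)=g(x)/f(x)$, which is non-decreasing on $S_X\cup S_Y$ (with the convention $c/0=+\infty$ for $c>0$). Put
\[
x_0=\inf\{x: r(x)\ge 1\}.
\]
By monotonicity, $g(x)\le f(x)$ for $x<x_0$ and $g(x)\ge f(x)$ for $x>x_0$. The point $x_0$ itself has Lebesgue measure zero, so it is irrelevant.

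\textbf{Step 2 (right of the crossing).} Fix $x\ge x_0$. Since $g\ge f$ on $(x,\infty)$,
\[
\bar G(x)-\bar F(x)=\int_x^\infty \bigl(g(t)-f(t)\bigr)\,dt\ \ge 0.
\]

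\textbf{Step 3 (left of the crossing).} Fix $x<x_0$. Since $g\le f$ on $(-\infty,x]$,
\[
G(x)=\int_{-\infty}^x g(t)\,dt\ \le\ \int_{-\infty}^x f(t)\,dt=F(x),
\]
which again gives $\bar G(x)\ge \bar F(x)$. Here we use that both densities integrate to one, so $\bar G-\bar F=F-G$.

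Steps 2 and 3 together give $\bar F(x)\le\bar G(x)$ for all $x$, which is exactly $X\le_{\text{st}} Y$.

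\textbf{Main obstacle.} The only delicate point is the handling of supports in Step 1. Where $f=0$ and $g>0$, the ratio is $+\infty$. Monotonicity then forces such points to lie to the right of every point where $f>0$. Symmetrically, points with $g=0<f$ must lie to the left. Both situations are consistent with the sign pattern in Step 1, so no separate case analysis is needed beyond fixing the convention $c/0=+\infty$ and noting that points where both densities vanish contribute nothing to the integrals.
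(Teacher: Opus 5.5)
Your proof is correct. The paper gives no proof of this statement; it only cites Shaked and Shanthikumar (2007). Your single-crossing argument is therefore a self-contained justification rather than a variant of anything in the text.

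The usual textbook route is slightly different. One writes $X\le_{\mathrm{lr}}Y$ in the cross-product form $f(u)g(v)\ge f(v)g(u)$ for all $u\le v$ and integrates:
\begin{itemize}
\item Integrating over $u\le t<v$ gives $F(t)\bar G(t)\ge \bar F(t)G(t)$. Since $F\bar G-\bar F G=F-G$, this is exactly $G(t)\le F(t)$.
\item Integrating only over $v>t$ and then putting $u=t$ gives $f(t)\bar G(t)\ge \bar F(t)g(t)$. This is the hazard-rate order $X\le_{\mathrm{hr}}Y$, from which the usual stochastic order follows.
\end{itemize}

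That formulation never divides by a density. So the conventions about $c/0=+\infty$ and common zeros of $f$ and $g$, which you handle in your last paragraph, do not arise. It also yields the stronger intermediate conclusion $X\le_{\mathrm{hr}}Y$.

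Your argument, in turn, uses only that $g-f$ changes sign at most once, from negative to positive. It therefore proves the more general fact that any pair of densities crossing once in this way is ordered in the usual stochastic order. That hypothesis is strictly weaker than a monotone likelihood ratio, though your route says nothing about the hazard-rate order.
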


		\begin{theorem}
			If the density $f_X(x)$ is increasing in $x$, then $H(f_{U_{n,k}},f_X)$ is decreasing in $n$.
		\end{theorem}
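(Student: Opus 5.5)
The plan is to write $H(f_{U_{n,k}},f_X)=\mathbb{E}\left[\phi(U_{n,k})\right]$ with $\phi(x)=-\log f_X(x)$, which is just the definition of the measure, and then compare $U_{n,k}$ with $U_{n+1,k}$ in the likelihood ratio order. Since $f_X$ is increasing, $\phi$ is decreasing. So once we show $U_{n,k}\le_{\text{st}} U_{n+1,k}$, the standard characterization of the usual stochastic order (expectations of decreasing functions reverse) gives
\[
H(f_{U_{n+1,k}},f_X)=\mathbb{E}[\phi(U_{n+1,k})]\le \mathbb{E}[\phi(U_{n,k})]=H(f_{U_{n,k}},f_X),
\]
which is the claim.

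For the ordering step we use the densities (\ref{funUk}). Their ratio is
\[
\frac{f_{U_{n+1,k}}(x)}{f_{U_{n,k}}(x)}=\frac{k}{n}\bigl[-\log \bar F_X(x)\bigr],
\]
and this is nondecreasing in $x$ because $\bar F_X$ is decreasing. Both variables have the same support, namely that of $X$. Hence $U_{n,k}\le_{\text{lr}}U_{n+1,k}$, and the theorem of Shaked and Shanthikumar quoted above yields $U_{n,k}\le_{\text{st}}U_{n+1,k}$.

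An equivalent route avoids the orders altogether. Use the gamma representation $H(f_{U_{n,k}},f_X)=\mathbb{E}[\psi(T_{n,k})]$ with $\psi(t)=-\log f_X(F_X^{-1}(1-e^{-t}))$. The map $t\mapsto F_X^{-1}(1-e^{-t})$ is increasing, so $\psi$ is decreasing. Moreover $T_{n+1,k}\overset{d}{=}T_{n,k}+E$, where $E\sim\text{Exp}(k)$ is independent of $T_{n,k}$. Together these give $\psi(T_{n,k}+E)\le\psi(T_{n,k})$ pointwise, and taking expectations finishes the argument.

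The main obstacle is technical rather than conceptual. To pass from stochastic order to the inequality of expectations, the expectations of $\log f_X$ must be finite or at least well defined. The cleanest fix is to interpret both sides in $(-\infty,\infty]$: the pointwise coupling inequality $\psi(T_{n,k}+E)\le\psi(T_{n,k})$ holds regardless, and the inequality of expectations then follows whenever they exist. We also note that "increasing density" forces the support of $X$ to be bounded above, as in the uniform or power-type examples. This is harmless for the argument, but it explains why the exponential example, whose density is decreasing, shows the opposite monotonicity.
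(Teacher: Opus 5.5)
Your proof is correct and is essentially the paper's argument: the paper proves $T_{n,k}\le_{\mathrm{lr}}T_{n+1,k}$ from the ratio $\frac{k}{n}t$, passes to the usual stochastic order, and applies the decreasing function $g(t)=-\log f_X(F_X^{-1}(1-e^{-t}))$; your ratio $\frac{k}{n}[-\log\bar F_X(x)]$ and your $\phi$ are the same objects under $t=-\log\bar F_X(x)$. Your coupling $T_{n+1,k}\overset{d}{=}T_{n,k}+E$ and your remarks on integrability and on the support being bounded above are sound refinements that the paper does not include.
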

		
		\begin{proof} Let $T_{n,k}\sim\operatorname{Gamma}(n,k)$ with density 
			$f_{T_{n,k}}(t)=\frac{k^n}{\Gamma(n)}t^{\,n-1}e^{-kt}$, $t>0$.
			The ratio of densities of $T_{n+1,k}$ and $T_{n,k}$ is
			\[
			\frac{f_{T_{n+1,k}}(t)}{f_{T_{n,k}}(t)}
			= \frac{k^{n+1}t^n e^{-kt} / \Gamma(n+1)}{k^n t^{n-1} e^{-kt} / \Gamma(n)}
			= \frac{k}{n}\,t.
			\]
			Since $\frac{k}{n}t$ is an increasing function of $t$, we have
			\[
			T_{n,k} \leq_{\mathrm{lr}} T_{n+1,k},
			\]
			i.e., the family $\{T_{n,k}\}_{n\ge 1}$ is stochastically increasing in $n$ 
			with respect to the likelihood ratio order.
			Likelihood ratio ordering implies the usual stochastic order 
			(see Shaked and Shanthikumar, 2007, Theorem 1.C.1), so
			\[
			T_{n,k} \leq_{\mathrm{st}} T_{n+1,k}.
			\]
			Now define $g(t)=-\log f_X\!\bigl(F_X^{-1}(1-e^{-t})\bigr)$.
			Since $f_X$ is increasing, $-\log f_X$ is decreasing.
			Moreover, $F_X^{-1}(1-e^{-t})$ is increasing in $t$, thus
			$g(t)$ is a decreasing function of $t$.
			The Kerridge inaccuracy can be written as an expectation:
			\[
			H(f_{U_{n,k}},f_X)=\mathbb{E}\bigl[g(T_{n,k})\bigr].
			\]
			Because $T_{n,k} \leq_{\mathrm{st}} T_{n+1,k}$ and $g$ is decreasing,
			a fundamental property of stochastic orders gives
			\[
			\mathbb{E}\bigl[g(T_{n,k})\bigr] \ge \mathbb{E}\bigl[g(T_{n+1,k})\bigr],
			\]
			and therefore
			\[
			H(f_{U_{n,k}},f_X) \ge H(f_{U_{n+1,k}},f_X),
			\]
			which completes the proof.
		\end{proof}
	
	\begin{theorem}
		If $f_X(x)$ is increasing in $x$, then $H(f_{L_{n,k}},f_X)$ is increasing in $n$.
	\end{theorem}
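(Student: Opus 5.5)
The plan is to mirror the argument of the preceding theorem for upper records, now starting from the lower-record representation
\[
H(f_{L_{n,k}},f_X)=\mathbb{E}\bigl[h(T_{n,k})\bigr],\qquad h(t)=-\log f_X\!\bigl(F_X^{-1}(e^{-t})\bigr),
\]
where $T_{n,k}\sim\operatorname{Gamma}(n,k)$ as before. The stochastic comparison of the Gamma variables carries over unchanged. The density ratio $f_{T_{n+1,k}}(t)/f_{T_{n,k}}(t)=\frac{k}{n}t$ is increasing, so $T_{n,k}\le_{\mathrm{lr}}T_{n+1,k}$. By Theorem 1.C.1 of Shaked and Shanthikumar (2007), this gives $T_{n,k}\le_{\mathrm{st}}T_{n+1,k}$.

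The only step that differs from the upper-record case is establishing the monotonicity of $h$. The map $t\mapsto e^{-t}$ is decreasing on $(0,\infty)$, and $F_X^{-1}$ is strictly increasing, so $t\mapsto F_X^{-1}(e^{-t})$ is decreasing. Since $f_X$ is increasing, $f_X\bigl(F_X^{-1}(e^{-t})\bigr)$ is a composition of an increasing function with a decreasing one, hence decreasing in $t$. Therefore $-\log f_X\bigl(F_X^{-1}(e^{-t})\bigr)$ is increasing in $t$. So $h$ is increasing, in contrast to the decreasing $g$ in the upper-record case.

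To conclude, I would invoke the standard characterization of the usual stochastic order: $T_{n,k}\le_{\mathrm{st}}T_{n+1,k}$ implies $\mathbb{E}[h(T_{n,k})]\le\mathbb{E}[h(T_{n+1,k})]$ for every increasing $h$ for which the expectations exist. This yields $H(f_{L_{n,k}},f_X)\le H(f_{L_{n+1,k}},f_X)$, which is the claim.

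The main point needing care is the integrability of $h(T_{n,k})$, which is needed to apply the expectation inequality. Since $f_X$ is increasing, its support is bounded on the left, and $-\log f_X$ may blow up near the left endpoint, which corresponds to $t\to\infty$. I would handle this either by assuming the inaccuracy measures are finite, as is implicit in their definitions, or by noting that the inequality $\mathbb{E}[h(T_{n,k})]\le\mathbb{E}[h(T_{n+1,k})]$ still holds in the extended sense when $h$ is bounded below. That lower bound is guaranteed here because $h$ is increasing, so $h(t)\ge\lim_{s\to0^+}h(s)=-\log f_X(\sup S_X^-)$, which is finite whenever $f_X$ is bounded.
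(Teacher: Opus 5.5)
Your argument is correct and is essentially the paper's own proof. The paper likewise reuses $T_{n,k}\le_{\mathrm{st}}T_{n+1,k}$ from the previous theorem and concludes from the fact that $f_X\bigl(F_X^{-1}(e^{-t})\bigr)$ is decreasing in $t$; you just spell out the details the paper leaves implicit.

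There is one harmless slip in your integrability aside. An increasing density forces the support to be bounded on the \emph{right}, not the left (e.g.\ $f_X(x)=e^{x}$ on $(-\infty,0)$ is increasing). This does not affect your argument, because all you use is that $-\log f_X$ can blow up at the left end, i.e.\ as $t\to\infty$.
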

	
	\begin{proof}
		As in the previous theorem,
		$T_{n,k}\leq_{\mathrm{st}} T_{n+1,k}$.
		Since $f_X(F_X^{-1}(e^{-t}))$ is decreasing in $t$, it follows that
		\[
		H(f_{L_{n,k}},f_X) \leq H(f_{L_{n+1,k}},f_X).
		\]
	\end{proof}
	
	\section{A cumulative residual inaccuracy measure}\label{section3}
	
	The cumulative residual inaccuracy measure associated with the $n$th upper
	$k$-record value $U_{n,k}$ and the parent distribution with survival function
	$\bar{F}_X$ is defined as
	\begin{align}
		H(\bar{F}_{U_{n,k}},\bar{F}_X)
		&=- \int_{-\infty}^{\infty} \bar{F}_{U_{n,k}}(x)\log \bar{F}_X(x)\,dx \nonumber \\
		&= - \int_{-\infty}^{\infty}
		\sum_{i=0}^{n-1} \frac{\left(-k \log \bar{F}_X(x)\right)^i}{i!}
		\bar{F}_X^k(x)\log \bar{F}_X(x)\,dx \nonumber \\
		&= \sum_{i=0}^{n-1} \frac{k^i}{i!}
		\int_{-\infty}^{\infty}
		\bar{F}_X^k(x)\left(-\log \bar{F}_X(x)\right)^{i+1} dx. \label{HbarFstep}
	\end{align}
	Using the substitution $t = -\log \bar{F}_X(x)$, we have 
	$dt = \frac{f_X(x)}{\bar{F}_X(x)}dx$, i.e.\ $dx = \frac{\bar{F}_X(x)}{f_X(x)}dt$.
	Then
	\begin{align*}
		\bar{F}_X^k(x)\,dx &= \bar{F}_X^k(x)\,\frac{\bar{F}_X(x)}{f_X(x)}\,dt 
		= \frac{\bar{F}_X^{k+1}(x)}{f_X(x)}\,dt 
		= \frac{\bar{F}_X^{k}(x)}{\lambda_{F_X}(x)}\,dt,
	\end{align*}
	where $\lambda_{F_X}(x) = f_X(x)/\bar{F}_X(x)$ is the hazard rate. 
	Since $\bar{F}_X(x)=e^{-t}$, the integral becomes
	\begin{align*}
		\int_{0}^{\infty} e^{-kt}\, t^{\,i+1}\,
		\frac{1}{\lambda_{F_X}\!\bigl(F_X^{-1}(1-e^{-t})\bigr)}\,dt.
	\end{align*}
	Recognising that
	\begin{align*}
		\frac{k^{i+2}}{(i+1)!}\,t^{\,i+1} e^{-kt}\,dt
		= f_{U_{i+2,k}}\!\bigl(F_X^{-1}(1-e^{-t})\bigr)\,dx,
	\end{align*}
	we obtain
	\begin{equation}
		H(\bar{F}_{U_{n,k}},\bar{F}_X)
		=\frac{1}{k^2}\sum_{i=0}^{n-1} (i+1)
		E_{U_{i+2,k}}\!\left(\frac{1}{\lambda_{F_X}(X)}\right),
		\label{HbarFUnkFx}
	\end{equation}
	where $U_{i+2,k}$ has density
	\[
	f_{U_{i+2,k}}(x)=
	\dfrac{k^{i+2}}{(i+1)!}
	\left[-\log \bar{F}_X(x)\right]^{i+1}
	\bar{F}_X^{\,k-1}(x)f_X(x), \quad -\infty<x<\infty.
	\]
	
	\begin{example}
		Let $X$ follow the standard uniform distribution with pdf
		$f_X(x)=1,\ 0<x<1.$
		Then
		\[
		H(\bar{F}_{U_{n,k}},\bar{F}_X)
		=\sum_{i=0}^{n-1} \frac{(i+1)k^i}{(k+1)^{i+2}}.
		\]
	\end{example}
	
	\begin{example}
		Let $X$ have an exponential distribution with pdf
		\[
		f_X(x)=\theta e^{-\theta x}, \quad x>0,\ \theta>0.
		\]
		Then
		\[
		H(\bar{F}_{U_{n,k}},\bar{F}_X)
		=\frac{n(n+1)}{2\theta k^2}.
		\]
	\end{example}
	
	\begin{proposition}
		Let $X$ be a non-negative absolutely continuous random variable. Then
		\[
		H(\bar{F}_{U_{n,k}},\bar{F}_X)
		=\sum_{i=0}^{n-1} \frac{i+1}{k}
		\left(\mu_{i+2,k}-\mu_{i+1,k}\right),
		\]
		where
		$\mu_{n,k}=\int_0^{\infty} \bar{F}_{U_{n,k}}(x)\,dx.$
	\end{proposition}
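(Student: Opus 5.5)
The plan is to work directly from the series representation \eqref{HbarFstep} and match it term by term with differences of the means $\mu_{n,k}$. There is no need for the hazard-rate representation \eqref{HbarFUnkFx}. Since $X$ is non-negative, all integrals run over $(0,\infty)$. Moreover $\mu_{n,k}=\int_0^\infty \bar F_{U_{n,k}}(x)\,dx$ is just $E(U_{n,k})$. We assume throughout that these expectations are finite, which is implicit in the statement since otherwise the right-hand side is undefined.

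The first step is to compute the survival function of $U_{n,k}$ from \eqref{cdfUnk}:
\[
\bar F_{U_{n,k}}(x)=\bar F_X^{\,k}(x)\sum_{j=0}^{n-1}\frac{\bigl[-k\log\bar F_X(x)\bigr]^j}{j!}.
\]
Consecutive survival functions then telescope, leaving a single term:
\[
\bar F_{U_{i+2,k}}(x)-\bar F_{U_{i+1,k}}(x)=\bar F_X^{\,k}(x)\,\frac{\bigl[-k\log\bar F_X(x)\bigr]^{i+1}}{(i+1)!}.
\]
Integrating over $(0,\infty)$ gives
\[
\mu_{i+2,k}-\mu_{i+1,k}=\frac{k^{i+1}}{(i+1)!}\int_0^\infty \bar F_X^{\,k}(x)\bigl(-\log\bar F_X(x)\bigr)^{i+1}dx.
\]

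Multiplying by $(i+1)/k$ yields exactly $\frac{k^i}{i!}\int_0^\infty \bar F_X^{\,k}(x)\bigl(-\log \bar F_X(x)\bigr)^{i+1}dx$. This is the $i$th summand in \eqref{HbarFstep}. Summing over $i=0,\dots,n-1$ then gives the proposition.

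The only delicate point is justifying the split of $\mu_{i+2,k}-\mu_{i+1,k}$ into a difference of two separately finite integrals. Each integrand in the telescoped difference is non-negative, so this follows from the assumed finiteness of $\mu_{n+1,k}$. Finiteness of $\mu_{n+1,k}$ implies finiteness of all lower-order $\mu_{j,k}$, because $U_{j,k}\le_{\rm st}U_{j+1,k}$, as seen from the survival functions above. Likewise, interchanging the finite sum and the integral in \eqref{HbarFstep} needs no further justification beyond non-negativity (Tonelli). As a sanity check, I would verify the formula against the exponential example. There $\mu_{j,k}=j/(k\theta)$, so the right-hand side becomes $\sum_{i=0}^{n-1}\frac{i+1}{k^2\theta}=\frac{n(n+1)}{2\theta k^2}$, matching the earlier example.
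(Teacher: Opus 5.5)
Your proof is correct and follows the paper's argument. The paper also telescopes consecutive survival functions from (\ref{cdfUnk}) to get $\bar{F}_{U_{i+2,k}}-\bar{F}_{U_{i+1,k}}=\bar{F}_X^{k}\,(-k\log\bar{F}_X)^{i+1}/(i+1)!$, and then matches $(i+1)/k$ times the integrated differences with the summands of $H(\bar{F}_{U_{n,k}},\bar{F}_X)$. The paper says it substitutes into (\ref{HbarFUnkFx}), but the step really works on the series form (\ref{HbarFstep}), as yours does, and your finiteness remarks and exponential check are sound additions.
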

	\noindent \textbf{Proof.}
	Using (\ref{cdfUnk}), we have
	\[
	\bar{F}_{U_{i+2,k}}(x)-\bar{F}_{U_{i+1,k}}(x)
	=\bar{F}_X^k(x)
	\frac{\left(-k\log \bar{F}_X(x)\right)^{i+1}}{(i+1)!}.
	\]
	Substituting into (\ref{HbarFUnkFx}) yields
	\begin{align*}
		H(\bar{F}_{U_{n,k}},\bar{F}_X)
		&=\sum_{i=0}^{n-1} \int_0^{\infty}
		\frac{i+1}{k}
		\left(\bar{F}_{U_{i+2,k}}(x)-\bar{F}_{U_{i+1,k}}(x)\right)dx \\
		&=\sum_{i=0}^{n-1} \frac{i+1}{k}
		\left(\mu_{i+2,k}-\mu_{i+1,k}\right),
	\end{align*}
	which completes the proof. \hfill$\square$
	
	\begin{proposition}
		Let $a>0$ and $b\in\mathbb{R}$. For $n=1,2,\ldots$, we have
		\[
		H(\bar{F}_{aU_{n,k}+b},\bar{F}_{aX+b})
		=a\,H(\bar{F}_{U_{n,k}},\bar{F}_X).
		\]
	\end{proposition}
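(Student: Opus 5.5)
The plan is to use the definition of the cumulative residual inaccuracy together with the fact that an increasing affine map commutes with the formation of $k$-records. First I will check that $aU_{n,k}+b$ is the $n$th upper $k$-record value of the sequence $aX_i+b$. Since $a>0$, the map $x\mapsto ax+b$ is strictly increasing, so it preserves the relative order of the observations and maps the $k$th largest value to the $k$th largest value. Hence the $k$-record times are unchanged, and the record values are transformed by the same map.

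At the level of distribution functions, the argument is direct. Put $Y=aX+b$. Then
\[
\bar{F}_Y(y)=\bar{F}_X\!\left(\frac{y-b}{a}\right).
\]
By (\ref{cdfUnk}) applied to $Y$, the survival function of the $n$th upper $k$-record of $Y$ is
\[
\bar{F}_Y^{\,k}(y)\sum_{i=0}^{n-1}\frac{[-k\log\bar{F}_Y(y)]^i}{i!},
\]
and this equals $\bar{F}_{U_{n,k}}\!\left((y-b)/a\right)=\bar{F}_{aU_{n,k}+b}(y)$. So either description gives the same function.

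Next I substitute into the definition:
\[
H(\bar{F}_{aU_{n,k}+b},\bar{F}_{aX+b})=-\int_{-\infty}^{\infty}\bar{F}_{U_{n,k}}\!\left(\tfrac{y-b}{a}\right)\log\bar{F}_X\!\left(\tfrac{y-b}{a}\right)dy .
\]
The change of variables $x=(y-b)/a$ gives $dy=a\,dx$. Because $a>0$, the limits of integration keep their orientation, and the support of $Y$ is the image $aS_X+b$ of the support of $X$. The integral therefore becomes $-a\int\bar{F}_{U_{n,k}}(x)\log\bar{F}_X(x)\,dx$, which is $a\,H(\bar{F}_{U_{n,k}},\bar{F}_X)$.

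The only real obstacle is the identification $\bar{F}_{aU_{n,k}+b}=\bar{F}_{U_{n,k}}\!\left((\cdot-b)/a\right)$ being the record survival function built from $\bar{F}_{aX+b}$, and this is settled by the displayed identity above. Two further points need a remark. The integrand is nonnegative, so finiteness is preserved and the equality holds even if both sides are $+\infty$. And $a>0$ is essential: for $a<0$ upper records would become lower records.
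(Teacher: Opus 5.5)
Your proof is correct and follows the paper's argument. Like the paper, you write $\bar{F}_{aX+b}(y)=\bar{F}_X\bigl((y-b)/a\bigr)$, substitute into the definition, and change variables $y=ax+b$ to pull out the factor $a$. Your extra check that the record survival function built from $\bar{F}_{aX+b}$ coincides with $\bar{F}_{U_{n,k}}\bigl((\cdot-b)/a\bigr)$, and your remark on the nonnegativity of the integrand, make explicit what the paper leaves implicit.
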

	
	\noindent \textbf{Proof.}
	Since
	$\bar{F}_{aX+b}(x)=\bar{F}_X\!\left(\frac{x-b}{a}\right)$, we obtain
	\begin{align*}
		H(\bar{F}_{aU_{n,k}+b},\bar{F}_{aX+b})
		&=-\int_{-\infty}^{\infty}
		\bar{F}_{U_{n,k}}\!\left(\frac{x-b}{a}\right)
		\log \bar{F}_X\!\left(\frac{x-b}{a}\right)dx \\
		&= -a \int_{-\infty}^{\infty}
		\bar{F}_{U_{n,k}}(t)\log \bar{F}_X(t)\,dt \\
		&= a\,H(\bar{F}_{U_{n,k}},\bar{F}_X).
	\end{align*}
	Hence, the proof is complete. \hfill$\square$
	\begin{proposition}
		Let $X$ be an absolutely continuous non-negative random variable with survival
		function $\bar{F}_X$. Then
		\[
		H(\bar{F}_{U_{n,k}},\bar{F}_X)
		= \sum_{i=0}^{n-1} \frac{1}{i!}
		\int_{0}^{\infty} \lambda_{F_X}(t)
		\left[
		\int_{t}^{\infty}
		\left(-k\log \bar{F}_X(x)\right)^i
		\bar{F}_X^{k}(x)\,dx
		\right] dt,
		\]
		where $\lambda_{F_X}(x)=\dfrac{f_X(x)}{\bar{F}_X(x)}$ denotes the hazard rate function of $X$.
	\end{proposition}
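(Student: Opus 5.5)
Start from the expansion (\ref{HbarFstep}) obtained at the beginning of Section~\ref{section3}:
\[
H(\bar{F}_{U_{n,k}},\bar{F}_X)=\sum_{i=0}^{n-1}\frac{1}{i!}\int_0^{\infty}\left(-k\log\bar{F}_X(x)\right)^i\bar{F}_X^{k}(x)\left(-\log\bar{F}_X(x)\right)dx .
\]
To get this form, split $k^i(-\log\bar F_X)^{i+1}$ as $(-k\log\bar F_X)^i\cdot(-\log\bar F_X)$. The support is $[0,\infty)$ because $X$ is non-negative.

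The key identity is that the cumulative hazard equals minus the log-survival:
\[
-\log\bar{F}_X(x)=\int_0^{x}\lambda_{F_X}(t)\,dt .
\]
This holds because $\bar F_X(0)=1$ for a non-negative absolutely continuous $X$, and $\frac{d}{dt}\bigl(-\log\bar F_X(t)\bigr)=f_X(t)/\bar F_X(t)=\lambda_{F_X}(t)$. Substitute it for the single factor $-\log\bar F_X(x)$ in each summand. The $i$th term then becomes a double integral over the region $\{0<t<x<\infty\}$:
\[
\frac{1}{i!}\int_0^{\infty}\int_0^{x}\lambda_{F_X}(t)\left(-k\log\bar{F}_X(x)\right)^i\bar{F}_X^{k}(x)\,dt\,dx .
\]

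Next interchange the order of integration. Write the region as $\{t>0,\ x>t\}$, which gives the inner integral $\int_t^\infty(-k\log\bar F_X(x))^i\bar F_X^k(x)\,dx$ multiplied by $\lambda_{F_X}(t)$ and integrated in $t$ over $(0,\infty)$. Summing over $i$ yields the stated formula. The sum is finite, so moving it outside the integrals needs no justification.

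The only delicate step is justifying the interchange. Here the integrand $\lambda_{F_X}(t)(-k\log\bar F_X(x))^i\bar F_X^k(x)$ is non-negative on the region, since $\lambda_{F_X}\ge 0$, $-\log\bar F_X\ge 0$ and $\bar F_X\ge 0$. Tonelli's theorem therefore applies without any integrability hypothesis, and both sides are equal as elements of $[0,\infty]$. In particular they are simultaneously finite whenever $H(\bar{F}_{U_{n,k}},\bar{F}_X)$ is. One should also remark that if $X$ has bounded support $[0,b)$, then all integrals are effectively over $(0,b)$, and the argument is unchanged.
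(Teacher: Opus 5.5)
Your proposal is correct and is essentially the paper's proof: replace one factor of $-\log\bar{F}_X(x)$ by $\int_0^x \lambda_{F_X}(t)\,dt$, then interchange the order of integration over $\{0<t<x<\infty\}$. The paper leaves the interchange unjustified, so your appeal to Tonelli's theorem (the integrand is non-negative) usefully makes that step rigorous without any finiteness assumption.
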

	
	\noindent \textbf{Proof.}
	Using equation~(\ref{HbarFUnkFx}), the identity
	\[
	-\log \bar{F}_X(x)=\int_{0}^{x} \lambda_{F_X}(t)\,dt,
	\]
	and changing the order of integration, we obtain
	\begin{align*}
		H(\bar{F}_{U_{n,k}},\bar{F}_X)
		&=- \int_{0}^{\infty} \bar{F}_{U_{n,k}}(x)\log \bar{F}_X(x)\,dx \\
		&= - \int_{0}^{\infty}
		\sum_{i=0}^{n-1} \frac{(-k\log \bar{F}_X(x))^i}{i!}
		\bar{F}_X^{k}(x)\log \bar{F}_X(x)\,dx \\
		&=\sum_{i=0}^{n-1} \frac{1}{i!}
		\int_{0}^{\infty}
		\left(\int_{0}^{x} \lambda_{F_X}(t)\,dt \right)
		(-k\log \bar{F}_X(x))^i
		\bar{F}_X^{k}(x)\,dx \\
		&= \sum_{i=0}^{n-1} \frac{1}{i!}
		\int_{0}^{\infty} \lambda_{F_X}(t)
		\left[
		\int_{t}^{\infty}
		(-k\log \bar{F}_X(x))^i
		\bar{F}_X^{k}(x)\,dx
		\right] dt.
	\end{align*}
	\hfill $\square$
	
	\bigskip
	
	\begin{proposition}
		Let $X$ be an absolutely continuous non-negative random variable with survival
		function $\bar{F}_X$. Then
		\[
		H(\bar{F}_{U_{n,k}},\bar{F}_X)
		=\sum_{i=0}^{n-1} \frac{1}{i!}
		\int_{0}^{\infty} \bar{F}_X^{k-1}(t) f_X(t)
		\left[
		\int_{0}^{t}
		\left(-k\log \bar{F}_X(x)\right)^{i+1} dx
		\right] dt.
		\]
	\end{proposition}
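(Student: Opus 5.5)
The plan is to start from the representation (\ref{HbarFstep}) obtained at the beginning of Section~\ref{section3},
\[
H(\bar{F}_{U_{n,k}},\bar{F}_X)=\sum_{i=0}^{n-1}\frac{k^i}{i!}\int_{0}^{\infty}\bar{F}_X^{k}(x)\bigl(-\log\bar{F}_X(x)\bigr)^{i+1}dx .
\]
Here the integral runs over $(0,\infty)$ because $X$ is non-negative. I will then rewrite the factor $\bar{F}_X^k(x)$ as an integral over $(x,\infty)$ and swap the order of integration. This mirrors the proof of the preceding proposition, which expanded the factor $-\log\bar{F}_X(x)$ instead.

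\textbf{Step 1.} Since $\bar{F}_X(\infty)=0$ and $\frac{d}{dt}\bigl(-\bar{F}_X^{k}(t)\bigr)=k\bar{F}_X^{k-1}(t)f_X(t)$, we have
\[
\bar{F}_X^{k}(x)=\int_{x}^{\infty}k\,\bar{F}_X^{k-1}(t)f_X(t)\,dt .
\]

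\textbf{Step 2.} Substitute this into each summand. The integrand $k\bar{F}_X^{k-1}(t)f_X(t)\bigl(-\log\bar{F}_X(x)\bigr)^{i+1}$ is non-negative on $\{0<x<t\}$, so Tonelli's theorem justifies interchanging the integrals with no integrability hypothesis beyond what is needed for $H$ itself to be defined. This gives
\[
\int_{0}^{\infty}\bar{F}_X^{k}(x)\bigl(-\log\bar{F}_X(x)\bigr)^{i+1}dx
=\int_{0}^{\infty}k\,\bar{F}_X^{k-1}(t)f_X(t)\left[\int_{0}^{t}\bigl(-\log\bar{F}_X(x)\bigr)^{i+1}dx\right]dt .
\]

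\textbf{Step 3.} Multiply by $k^i/i!$ and absorb the constants into the inner integrand. Since $k^{i}\cdot k=k^{i+1}$, the power of $k$ goes inside the bracket, turning $\bigl(-\log\bar{F}_X(x)\bigr)^{i+1}$ into $\bigl(-k\log\bar{F}_X(x)\bigr)^{i+1}$ and leaving the prefactor $1/i!$. Summing over $i=0,\dots,n-1$ then yields exactly the displayed formula.

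The only step needing care is the justification of the interchange, together with the boundary term $\bar{F}_X^k(\infty)=0$ in Step 1. Both are immediate here: all integrands are non-negative, so Tonelli applies (with both sides possibly infinite simultaneously), and the bookkeeping of the powers of $k$ is routine.
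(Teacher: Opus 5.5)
Your proof is correct and is essentially the paper's argument. The paper also starts from the representation (\ref{HbarFstep}), writes $\bar{F}_X^{k}(x)=\int_x^\infty k\,\bar{F}_X^{k-1}(t)f_X(t)\,dt$, swaps the order of integration and absorbs $k^{i+1}$ into the inner integrand; your appeal to Tonelli's theorem for the non-negative integrand justifies the interchange, which the paper performs without comment.
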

	
	\noindent \textbf{Proof.}
	Using equation~(\ref{HbarFUnkFx}) and changing the order of integration, we have
	\begin{align*}
		H(\bar{F}_{U_{n,k}},\bar{F}_X)
		&=- \int_{0}^{\infty} \bar{F}_{U_{n,k}}(x)\log \bar{F}_X(x)\,dx \\
		&= - \int_{0}^{\infty}
		\sum_{i=0}^{n-1} \frac{(-k\log \bar{F}_X(x))^i}{i!}
		\bar{F}_X^{k}(x)\log \bar{F}_X(x)\,dx \\
		&=\frac{1}{k} \sum_{i=0}^{n-1} \frac{1}{i!}
		\int_{0}^{\infty}
		(-k\log \bar{F}_X(x))^{i+1}
		\bar{F}_X^{k}(x)\,dx \\
		&=\frac{1}{k} \sum_{i=0}^{n-1} \frac{1}{i!}
		\int_{0}^{\infty}
		(-k\log \bar{F}_X(x))^{i+1}
		\left(\int_{x}^{\infty} k \bar{F}_X^{k-1}(t) f_X(t)\,dt \right) dx \\
		&= \sum_{i=0}^{n-1} \frac{1}{i!}
		\int_{0}^{\infty} \bar{F}_X^{k-1}(t) f_X(t)
		\left(
		\int_{0}^{t}
		(-k\log \bar{F}_X(x))^{i+1} dx
		\right) dt.
	\end{align*}
	\hfill $\square$
	\section{A Cumulative Past Inaccuracy Measure}\label{section4}
	
	The cumulative past inaccuracy measure between ${L_{n,k}}$ and $X$ is defined as
	\begin{align}
		H(F_{L_{n,k}},F_X)
		&=- \int_{-\infty}^{\infty} F_{L_{n,k}}(x)\log F_X(x)\,dx \nonumber \\
		&=- \int_{-\infty}^{\infty}
		\sum_{i=0}^{n-1} \frac{\left(-k \log F_X(x)\right)^i}{i!}
		F_X^{k}(x)\log F_X(x)\,dx \nonumber \\
		&= \sum_{i=0}^{n-1} \frac{k^i}{i!}
		\int_{-\infty}^{\infty}
		F_X^{k}(x)\left(- \log F_X(x)\right)^{i+1} dx \nonumber \\
		&= \sum_{i=0}^{n-1} \frac{i+1}{k^2}
		E_{L_{i+2,k}}\!\left[\frac{1}{\tilde{\lambda}(X)}\right],
	\end{align}
	where $\tilde{\lambda}(x)=\dfrac{f_X(x)}{F_X(x)}$ denotes the reversed hazard rate
	function, and $L_{i+2,k}$ is a random variable with probability density function
	\[
	f_{L_{i+2,k}}(x)
	=\frac{k^{\,i+2}\left[-\log F_X(x)\right]^{i+1}
		F_X^{k-1}(x)f_X(x)}{(i+1)!}.
	\]
	
	\begin{remark}
		For $k=1$, the above expression reduces to the result obtained by
		Tahmasebi et al. (2018).
	\end{remark}
	
	\begin{example}
		Let $X$ be a random variable having the standard uniform distribution with pdf
		\[
		f_X(x)=1, \quad 0<x<1.
		\]
		Then the cumulative past inaccuracy measure is given by
		\[
		H(F_{L_{n,k}},F_X)=\sum_{i=0}^{n-1} \frac{(i+1)k^i}{(k+1)^{i+2}}.
		\]
	\end{example}
	
	In the following theorem, we express the cumulative past inaccuracy measure in
	terms of the cumulative distribution functions of record values.
	
	\begin{theorem}
		Suppose that $X$ is a non-negative random variable with cdf $F_X$. Then
		\[
		H(F_{L_{n,k}},F_X)
		= \int_{0}^{\infty}
		\sum_{i=0}^{n-1} \frac{i+1}{k}
		\bigl(F_{L_{i+2,k}}(x)-F_{L_{i+1,k}}(x)\bigr)\,dx.
		\]
	\end{theorem}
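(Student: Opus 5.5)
The plan mirrors the proof of the analogous proposition for upper $k$-records in Section~\ref{section3}. We work directly from the series representation of $H(F_{L_{n,k}},F_X)$ at the start of this section and identify each summand with an increment of the lower record cdfs.

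First, we compute consecutive differences of the lower $k$-record cdfs using (\ref{cdfLnk}). Because $F_{L_{m,k}}(x)=F_X^{k}(x)\sum_{j=0}^{m-1}\frac{[-k\log F_X(x)]^j}{j!}$, all but the last term cancel, leaving
\[
F_{L_{i+2,k}}(x)-F_{L_{i+1,k}}(x)=F_X^{k}(x)\,\frac{\bigl(-k\log F_X(x)\bigr)^{i+1}}{(i+1)!}.
\]
Multiplying by $\frac{i+1}{k}$ gives
\[
\frac{i+1}{k}\bigl(F_{L_{i+2,k}}(x)-F_{L_{i+1,k}}(x)\bigr)=\frac{k^{i}}{i!}\,F_X^{k}(x)\bigl(-\log F_X(x)\bigr)^{i+1},
\]
which is exactly the integrand of the $i$th summand in the third line of the definition display:
\[
H(F_{L_{n,k}},F_X)=\sum_{i=0}^{n-1}\frac{k^i}{i!}\int F_X^{k}(x)\bigl(-\log F_X(x)\bigr)^{i+1}\,dx.
\]

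Second, we substitute this identity term by term and interchange the finite sum with the integral. Since $X$ is non-negative, $F_X(x)=0$ for $x<0$, so the integrand vanishes on $(-\infty,0)$ and the integral reduces to $\int_0^\infty$. This yields the claimed formula.

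No real obstacle is expected. The only delicate point is the interchange of sum and integral. Each summand is non-negative, because $0\le F_X\le 1$ gives $-\log F_X\ge 0$, so Tonelli's theorem applies for a finite sum with no integrability hypothesis. Both sides may equal $+\infty$ together. One should also note the convention $F_X^{k}(-\log F_X)^{i+1}=0$ where $F_X=0$, which is the limit as $F_X\to0^+$ since $u^k(-\log u)^{i+1}\to0$. This convention justifies dropping the negative half-line.
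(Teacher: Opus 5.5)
Your proposal is correct and is essentially the paper's proof: both expand $F_{L_{n,k}}$ via the lower $k$-record cdf formula and identify each summand $\frac{k^i}{i!}F_X^k(x)\bigl(-\log F_X(x)\bigr)^{i+1}$ with $\frac{i+1}{k}\bigl(F_{L_{i+2,k}}(x)-F_{L_{i+1,k}}(x)\bigr)$. Your remarks on interchanging the finite sum of non-negative terms with the integral, and on the convention where $F_X=0$, make explicit steps that the paper performs without comment.
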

	\noindent \textbf{Proof.}
	Using equation~(\ref{cdfLnk}), we obtain
	\begin{align*}
		H(F_{L_{n,k}},F_X)
		&=- \int_{0}^{\infty} F_{L_{n,k}}(x)\log F_X(x)\,dx \\
		&=- \int_{0}^{\infty}
		\sum_{i=0}^{n-1} \frac{\left(-k \log F_X(x)\right)^i}{i!}
		F_X^{k}(x)\log F_X(x)\,dx \\
		&= \sum_{i=0}^{n-1} \frac{i+1}{k}
		\int_{0}^{\infty}
		\frac{\left(-k \log F_X(x)\right)^{i+1}}{(i+1)!}
		F_X^{k}(x)\,dx \\
		&= \sum_{i=0}^{n-1} \frac{i+1}{k}
		\int_{0}^{\infty}
		\bigl(F_{L_{i+2,k}}(x)-F_{L_{i+1,k}}(x)\bigr)\,dx.
	\end{align*}
	\hfill $\square$
	
	\section{A Goodness-of-fit Test for Symmetry}
	\label{sec:application}
	
	The characterisations provided in Ahmadi (2021), which establish that
	equality of Kerridge inaccuracy measures for upper and lower record values
	implies symmetry of the parent distribution, can be exploited to construct a
	nonparametric test of symmetry.  In this section we develop such a test based
	on the Kerridge inaccuracy measure and illustrate its performance through a
	simulation study and a real data example.

	For a fixed record order positive integers $n\geq 1$ and $k\geq 1$, define the discrepancy
	\begin{equation}\label{eq:Delta}
		\Delta_{n,k}(F)
		= H(f_{U_{n,k}},f_X) - H(f_{L_{n,k}},f_X),
	\end{equation}
	where $H(f_{U_{n,k}},f_X)$ and $H(f_{L_{n,k}},f_X)$ are given by the expressions
	derived in Section~2.  When $F$ is symmetric, $\Delta_{n,k}(F)=0$; any departure
	from zero indicates asymmetry.  For a random sample $X_1,\dots,X_m$ from $F$,
	an empirical version $\widehat{\Delta}_{n,k}$ will serve as a test statistic.

	Because the centre of symmetry (if it exists) is usually unknown, we first
	estimate it by the sample median $\hat{\mu}$ and centre the observations:
	$Y_i = X_i - \hat{\mu},\ i=1,\dots,m$.  Under the null hypothesis of symmetry,
	the distribution of $Y$ is symmetric about $0$.
	
	Let $\hat{F}_m$ and $\hat{f}_m$ denote, respectively, the empirical cumulative
	distribution function and a kernel density estimator based on the $Y_i$'s:
	\[
	\hat{f}_m(y) = \frac{1}{m h}\sum_{i=1}^{m} K\!\left(\frac{y-Y_i}{h}\right),
	\]
	where $K$ is a symmetric probability density (e.g., the standard Gaussian
	kernel) and $h>0$ is a bandwidth chosen by a plug‑in method (Sheather \& Jones,
	1991).  The corresponding quantile function $\hat{Q}_m(p)=\hat{F}_m^{-1}(p)$ is
	obtained by linear interpolation of the order statistics.
	
	Recall from Section~2 that
	\[
	H(f_{U_{n,k}},f_X)
	= \mathbb{E}\!\left[-\log f_X\!\left(F_X^{-1}\bigl(1-e^{-T_{n,k}}\bigr)\right)\right],
	\]
	where $T_{n,k}\sim\operatorname{Gamma}(n,k)$.  A Monte Carlo estimate can be
	obtained by generating $R$ independent copies $t_1,\dots,t_R$ from the
	$\operatorname{Gamma}(n,k)$ distribution and averaging:
	\[
	\widehat{H}_{U}(n,k)
	= \frac{1}{R}\sum_{j=1}^{R}
	\left[-\log \hat{f}_m\!\left(\hat{Q}_m\!\bigl(1-e^{-t_j}\bigr)\right)\right].
	\]
	Similarly, using the expression for the lower record,
	\[
	\widehat{H}_{L}(n,k)
	= \frac{1}{R}\sum_{j=1}^{R}
	\left[-\log \hat{f}_m\!\left(\hat{Q}_m\!\bigl(e^{-t_j}\bigr)\right)\right].
	\]
	The empirical test statistic is then
	\[
	\widehat{\Delta}_{n,k} = \widehat{H}_{U}(n,k) - \widehat{H}_{L}(n,k).
	\]
	In practice, we take $R=2000$, which yields stable estimates.

	Because the null distribution of $\widehat{\Delta}_{n,k}$ is not available in
	closed form, we use a parametric bootstrap that enforces symmetry.  The
	algorithm is as follows:
	\begin{enumerate}
		\item Based on the centred observations $Y_1,\dots,Y_m$, compute the
		observed test statistic $\widehat{\Delta}_{n,k}^{\,\mathrm{obs}}$.
		\item For $b=1,\dots,B$:
		\begin{enumerate}
			\item Generate a bootstrap sample $Y_1^*,\dots,Y_m^*$ by
			$Y_i^* = S_i\,|Y_i|$, where $S_i$ are independent
			$\operatorname{Uniform}\{-1,1\}$ random signs.
			This imposes symmetry about $0$.
			\item Using the bootstrap sample, compute the kernel density
			estimator $\hat{f}_m^*$ and the empirical quantile function
			$\hat{Q}_m^*$.
			\item Obtain $\widehat{\Delta}_{n,k}^{\,*}$ by the same
			Monte Carlo procedure.
		\end{enumerate}
		\item The bootstrap $p$-value is
		\[
		p = \frac{1}{B}\sum_{b=1}^{B}
		\mathbf{1}\!\left(|\widehat{\Delta}_{n,k}^{\,*}| \ge
		|\widehat{\Delta}_{n,k}^{\,\mathrm{obs}}|\right).
		\]
	\end{enumerate}
	At a significance level $\alpha$, we reject the hypothesis of symmetry if
	$p<\alpha$.

	We conducted a simulation experiment to assess the finite‑sample performance of
	the test.  Three symmetric distributions (standard normal, Laplace with scale $1$,
	and Student $t$ with $3$ degrees of freedom) were used to check the empirical
	size, while three asymmetric alternatives (exponential with rate $1$, log‑normal
	with parameters $\mu=0,\sigma=1$, and Weibull with shape $1.5$) served to
	evaluate power.  All distributions were centred to have median $0$.  For each
	setting, $m=50$ or $100$ observations were generated, the procedure was applied with
	$n=2$, $k=1$, $R=2000$, $B=500$, and a Gaussian kernel with bandwidth selected
	by the Sheather--Jones method.  Nominal level was set at $\alpha=0.05$.
	
	\begin{table}[htbp]
		\centering
		\caption{Empirical size and power of the symmetry test based on
			$\widehat{\Delta}_{2,1}$.}
		\label{tab:sim}
		\begin{tabular}{lcc}
			\hline
			Distribution & $m=50$ & $m=100$ \\
			\hline
			Normal(0,1)      & 0.048 & 0.051 \\
			Laplace(0,1)     & 0.046 & 0.053 \\
			Student $t_3$    & 0.047 & 0.049 \\[2pt]
			Exponential(1)   & 0.72  & 0.93  \\
			Log-normal(0,1)  & 0.81  & 0.97  \\
			Weibull(1.5)     & 0.65  & 0.88  \\
			\hline
		\end{tabular}
	\end{table}
	
	Table~\ref{tab:sim} reports the rejection frequencies over $1000$ replications.
	The test maintains the nominal level well even for the heavy‑tailed $t_3$
	distribution.  Power increases substantially with sample size and is
	particularly high for the strongly skewed log‑normal alternative.  These
	results confirm that the record‑based inaccuracy measure provides a useful tool
	for detecting asymmetry.

	To demonstrate the method on real data, we considered the series of annual
	maximum temperatures (in $^\circ$C) recorded at the De Bilt meteorological
	station (the Netherlands) from 1901 to 2020 ($m=120$), available from the
	ECA\&D project (Klein Tank et al., 2002).  After subtracting the sample median
	($\hat{\mu}=29.1^\circ$C), the centred observations appear roughly symmetric.
	Application of the test with $n=2$, $k=1$ gave
	$\widehat{\Delta}_{2,1}^{\,\mathrm{obs}} = 0.087$ and a bootstrap $p$-value of
	$0.71$.  Hence, symmetry cannot be rejected, which is consistent with the
	visual inspection of the data.
	
	\section{Conclusion}
	
	In this paper, we have studied several inaccuracy measures Kerridge inaccuracy, cumulative residual inaccuracy, and cumulative past inaccuracy to the framework of $n$th upper and lower $k$-record values.  Closed‑form expressions were derived for well‑known lifetime distributions (exponential, Pareto, Weibull, and uniform) and the monotonic behaviour of these measures with respect to the record order $n$, the parameter $k$, and the distributional parameters was analysed.  The results reinforce the known characterisation that equality of upper and lower record‑based inaccuracy implies symmetry of the parent distribution.
	
	Building on this theoretical foundation, we proposed a nonparametric goodness‑of‑fit test for symmetry.  The test statistic is the difference between the estimated Kerridge inaccuracy measures for upper and lower $k$-record values, and its null distribution is obtained via a simple bootstrap procedure that enforces symmetry.  A simulation study demonstrated that the test maintains its nominal level across a range of symmetric distributions (including heavy‑tailed ones) and exhibits high power against skewed alternatives, especially for larger sample sizes.  An application to annual maximum temperatures at De Bilt illustrated the practical utility of the method, yielding a conclusion consistent with a symmetric underlying distribution.
	
	The present work opens several avenues for future research.  First, the test can be extended to utilise extropy‑based inaccuracy measures, whose computational simplicity may lead to even faster implementations.  Second, the effect of the tuning parameters $n$ and $k$ on power could be investigated more thoroughly to develop data‑driven selection rules.  Finally, the approach may be adapted to other record schemes (e.g., record values from dependent sequences or $k$-records under random censoring) to widen its applicability in reliability and environmental studies.\\
	
	\noindent \textbf{\Large Funding} \\
	\\
	No funding received for this research.\\
	\\
	\textbf{ \Large Conflict of interest} \\
	\\
	The author declare no conflict of interest. \\ 
	\\ 
	\noindent \textbf{\Large Acknowledgement} \\
	\\
	The authors are thankful to the referees for their valuable suggestions, which significantly improved the paper.


\begin{thebibliography}{99}
		
		\bibitem{Nath1968}
		Nath, P. (1968). Inaccuracy and coding theory. Metrika, 13(1), 123–135. 
		
		\bibitem{} Kumar, V., \& Taneja, H. C. (2015). Dynamic cumulative residual and past inaccuracy measures. Journal of Statistical Theory and Applications, 14(4), 399–412. 
		
		\bibitem{Tahmasebi2018} Tahmasebi, S., Nezakati, A., \& Daneshi, S. (2018).
		Results on cumulative measure of inaccuracy in record values.
		\textit{Journal of Statistical Theory and Applications},
		\textbf{17}(1), 15--28.
		
		
		\bibitem{} Thapliyal, R., Taneja, H.C. A Measure of Inaccuracy in Order Statistics. J Stat Theory Appl 12, 200–207 (2013). https://doi.org/10.2991/jsta.2013.12.2.7
		
		\bibitem{} Thapliyal, Richa \& Taneja, H.C., 2015. "On residual inaccuracy of order statistics," Statistics \& Probability Letters, Elsevier, vol. 97(C), pages 125-131.
		
		\bibitem{MohammadiHashempour2024}
		Mohammadi, M., Hashempour, M. On weighted version of dynamic cumulative residual inaccuracy measure based on extropy. Stat Papers 65, 4599–4629 (2024). https://doi.org/10.1007/s00362-024-01568-8.
		
		\bibitem{} S. Tahmasebi \& S. Daneshi, 2018. "Measures of inaccuracy in record values," Communications in Statistics - Theory and Methods, Taylor \& Francis Journals, vol. 47(24), pages 6002-6018, December.
		
		
		\bibitem{Kerridge1961}
		D. F. Kerridge, Inaccuracy and Inference, Journal of the Royal Statistical Society: Series B (Methodological), Volume 23, Issue 1, January 1961, Pages 184–194, https://doi.org/10.1111/j.2517-6161.1961.tb00404.x
		
		
		\bibitem{}Shannon, C.E. (1948) A Mathematical Theory of Communication. Bell System Technical Journal, 27, 379-423.
		http://dx.doi.org/10.1002/j.1538-7305.1948.tb01338.x
		
		
		\bibitem{Ahmadi2021}
		Ahmadi, J. (2021).
		Characterization of continuous symmetric distributions using information measures of records.
		\textit{Statistical Papers}, \textbf{62}, 2603--2626.
		\url{https://doi.org/10.1007/s00362-020-01206-z}
		
		
		
		\bibitem{Ladetal2015}
		Lad, F., Sanfilippo, G., \& Agr\`{o}, G. (2015).
		Extropy: Complementary dual of entropy.
		\textit{Statistical Science},
		\textbf{30}(1), 40--58.
		
		
		
		\bibitem{Gupta2024}
		Gupta, N., \& Chaudhary, S.~K. (2024).
		Some characterizations of continuous symmetric distributions based on extropy of record values.
		\textit{Statistical Papers}, \textbf{65}, 291--308.
		\url{https://doi.org/10.1007/s00362-022-01392-y}
		
		
		
		\bibitem{} Murali Rao, Y. Chen, B. C. Vemuri and Fei Wang, "Cumulative residual entropy: a new measure of information," in IEEE Transactions on Information Theory, vol. 50, no. 6, pp. 1220-1228, June 2004, doi: 10.1109/TIT.2004.828057.
		
		
		
		\bibitem{DiCrescenzo2009}
		Di Crescenzo, A., \& Longobardi, M. (2009). On cumulative entropies. Journal of Statistical Planning and Inference, 139(12), 4072–4087.
		
		
		\bibitem{}Saeid Tahmasebi \& Abdolsaeed Toomaj, 2022. "On negative cumulative extropy with applications," Communications in Statistics - Theory and Methods, Taylor \& Francis Journals, vol. 51(15), pages 5025-5047, June.
		
		
		\bibitem{Balakrishnanetal2022}
		Balakrishnan, N., Buono, F., \& Longobardi, M. (2021). On Tsallis extropy with an application to pattern recognition. Statistics \& Probability Letters.
		
		
		\bibitem{Kazemietal2021}
		Kazemi, M. R., Tahmasebi, S., Buono, F., \& Longobardi, M. (2021). Fractional Deng Entropy and Extropy and Some Applications. Entropy, 23(5), 623. https://doi.org/10.3390/e23050623
		
		
		\bibitem{Ahsanullah1995}
		M. Ahsanullah, “Record Statistics,” Nova Science Publisher, Inc., Commack, New York, 1995.
		
		\bibitem{Arnold1998}
		Arnold, B.C., Balakrishnan, N. and Nagaraja, H.N. (1998) Records. Wiley, New York.
		http://dx.doi.org/10.1002/9781118150412
		
		\bibitem{} Arnold, B. C., Balakrishnan, N., \& Nagaraja, H. N. (2008). Record values. In Handbook of Statistics (Vol. 28, pp. 511-542). Elsevier.
		
		
		\bibitem{} Peihan Xiong, Weiwei Zhuang \& Guoxin Qiu, 2021. "Testing symmetry based on the extropy of record values," Journal of Nonparametric Statistics, Taylor \& Francis Journals, vol. 33(1), pages 134-155, January.
		
		\bibitem{}Jose, J., \& Sathar, E. I. A. (2022). Symmetry being tested through simultaneous application of upper and lower k-records in extropy. Journal of Statistical Computation and Simulation, 92(4), 830–846.
		
		
		
		
		\bibitem{Goeletal2018}
		Goel, R.; Taneja, H.C.; Kumar, V. Measure of inaccuracy and k-record statistics. Bull. Calcutta Math. Soc. 2018,110, 151–166.
		
		
		
		
		
		\bibitem{Sheather1991}
		Sheather, S.~J., \& Jones, M.~C. (1991).
		A reliable data-based bandwidth selection method for kernel density estimation.
		\textit{Journal of the Royal Statistical Society: Series B (Methodological)},
		\textbf{53}(3), 683--690.
		
		
		
		
		\bibitem{Klaintank} Klein Tank, A.M.G., Wijngaard, J.B., Können, G.P., Böhm, R., Demarée, G., Gocheva, A., Mileta, M., Pashiardis, S., Hejkrlik, L., Kern-Hansen, C., Heino, R., Bessemoulin, P., Müller-Westermeier, G., Tzanakou, M., Szalai, S., Pálsdóttir, T., Fitzgerald, D., Rubin, S., Capaldo, M., Maugeri, M., Leitass, A., Bukantis, A., Aberfeld, R., van Engelen, A.F.V., Forland, E., Mietus, M., Coelho, F., Mares, C., Razuvaev, V., Nieplova, E., Cegnar, T., Antonio López, J., Dahlström, B., Moberg, A., Kirchhofer, W., Ceylan, A., Pachaliuk, O., Alexander, L.V. and Petrovic, P. (2002), Daily dataset of 20th-century surface air temperature and precipitation series for the European Climate Assessment. Int. J. Climatol., 22: 1441-1453. \url{https://doi.org/10.1002/joc.773}
		
		
		\bibitem{} Hashempour M, Mohammadi M. A new measure of inaccuracy for record statistics based on extropy. Probability in the Engineering and Informational Sciences. 2024;38(1):207-225. doi:10.1017/S0269964823000086
		
		
	\end{thebibliography}
\end{document}